\newtheorem{theorem}{Theorem}
\newtheorem{lemma}[theorem]{Lemma}
\theoremstyle{definition}
\newtheorem{observation}[theorem]{Observation}
\theoremstyle{remark}
\newtheorem{remark}[theorem]{Remark}
\numberwithin{equation}{section}
\begin{document}

\title{The complement of a nIL graph with thirteen vertices is IL.}

\date{\today}
\author{
Andrei Pavelescu and
Elena Pavelescu
}

\address{
Department of Mathematics, University of South Alabama, Mobile, AL  36688, USA.
}

\maketitle
\rhead{The complement of a nIL graph with thirteen vertices is IL.}

\begin{abstract}
We show that for any simple non-oriented graph $G$ with at least thirteen vertices either $G$ or its complement is intrinsically linked. 
\end{abstract}
\vspace{0.1in}

\section{Introduction}

Conway and Gordon \cite{CG},  and Sachs \cite{Sa} spearheaded the theory of spatial graphs by showing that every embedding of the complete graph on six vertices $K_6$ in $\mathbb{R}^3$ contains a non-trivial link.
Such graphs are called intrinsically linked (IL), since the property is intrinsic to the graph and does not depend on the particular embedding.
If not intrinsically linked,  $G$ is said to be \textit{linklessly embeddable} (nIL).
Conway and Gordon \cite{CG} also showed that $K_7$ is an \textit{intrinsically knotted} graph, a graph whose every embedding in $\mathbb{R}^3$ contains a  cycle which is a non-trivial knot.
These results have generated a significant amount of work, including the classification of intrinsically linked graphs by Robertson, Seymour and Thomas \cite{RST}: a graph is intrinsically linked if and only if it contains one of the graphs in the Petersen family of graphs as a minor. 
For a graph $G$, \textit{a minor of G} is any graph that can be obtained from $G$ by a sequence of vertex deletions, edge deletions and edge contractions.
An edge contraction means identifying its endpoints, deleting that edge,  and deleting any double edges thus created. 
In this article, all graphs are non-oriented, without loops  and without multiple edges. 

The work in this article was motivated in part by the result of Battle et al.  \cite{BHK} saying that  for a graph with nine vertices, either the graph or its complement is nonplanar, and nine is minimal with this property.
In particular, there exists a graph $G$ with 8 vertices such that both $G$ and its complement $cG$ are planar.
An example is given by the graph induced on vertices $v_1, \ldots, v_8$ in Figure \ref{K10example} (a).
Here we look at the similar question for intrinsic linkness.

We note that there are graphs with ten vertices such that both the graph and  its complement are linklessly embeddable. 
We give an example in Figure \ref{K10example}.
In this figure, vertex $v_9$ of $G$ is adjacent to all vertices $v_1,\ldots, v_8$ of $G$ (left) and vertex $v_{10}$ of $cG$ is adjacent to all vertices $v_1,\ldots, v_9$ of $cG$ (right).
In both $G$ and $cG$, the subgraph induced by $v_1, v_2, ..., v_7$ and $v_8$ is planar.
To see why this is an example, we remember the result of Sachs that a graph is planar if and only if the cone over it is linklessly embeddable \cite{Sa}.

\begin{figure}[htpb!]
\begin{center}
\begin{picture}(340, 170)
\put(0,0){\includegraphics{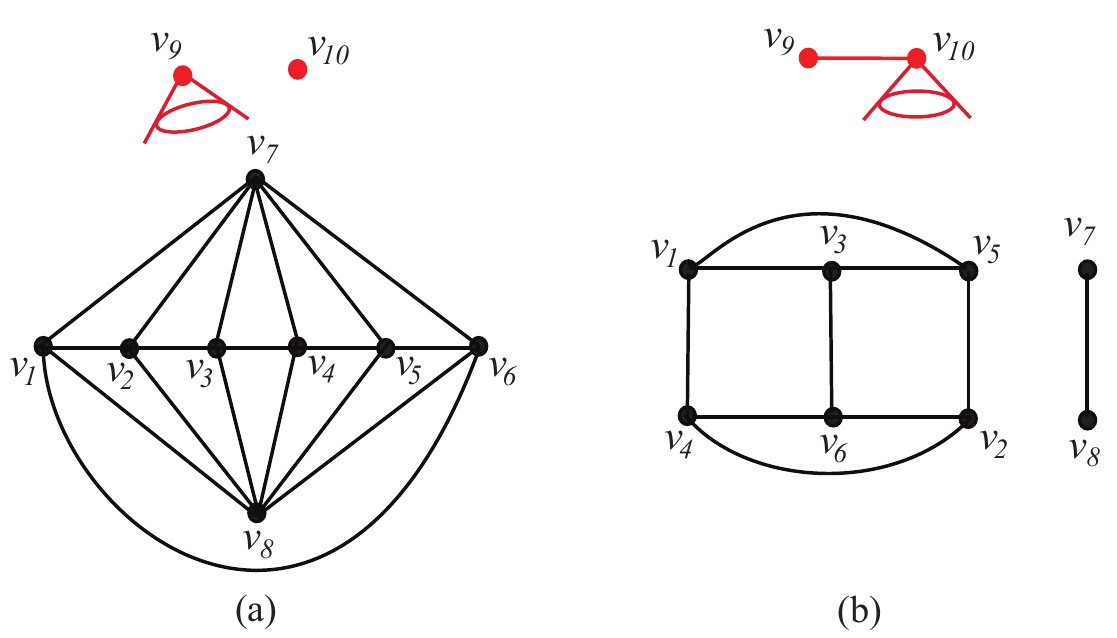}}
\end{picture}
\caption{The graphs $G$ (left) and  $cG$ (right) are both linklessly embeddable. Vertex $v_9$ of $G$ is adjacent to all vertices $v_1, \ldots, v_8$, and vertex $v_{10}$ of $cG$ is adjacent to all vertices $v_1, \ldots, v_9$.}
\label{K10example}
\end{center}
\end{figure}
Campbell et al.  \cite {CMOPRW} showed that a graph on $n\ge 6$ vertices and at least  $4n-9$ edges is intrinsically linked because it contains a $K_6$ minor.
Since the complete graph on $n$ vertices has  ${n}\choose{2}$ edges and  ${n}\choose{2}$$\ge 2(4n-9)$  for $n\ge15$, for a graph $G$ with  $n\ge 15$ vertices, either $G$ or its complement is intrinsically linked.
We improve this result by proving the following:

\begin{theorem}
If $G$ is a graph with at least thirteen vertices, either $G$ or its complement is intrinsically linked. 
\end{theorem}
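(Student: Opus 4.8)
The plan is to reduce immediately to the case $|V(G)| = 13$ and then argue by contradiction. If $G$ has more than thirteen vertices, restrict to any thirteen of them: for a vertex set $S$ one has $(cG)[S] = c(G[S])$, since an edge of $cG$ inside $S$ is exactly a non-edge of $G$ inside $S$. As intrinsic linking is minor-monotone, a graph containing an IL subgraph is itself IL; thus if $G[S]$ is IL then so is $G$, and if $(cG)[S] = c(G[S])$ is IL then so is $cG$. Hence the theorem for thirteen vertices implies it for all larger graphs, and I may assume $|V(G)| = 13$ and suppose, for contradiction, that both $G$ and $cG$ are nIL.

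Next I would pin down the edge counts. Since $K_6$ lies in the Petersen family, neither $G$ nor $cG$ has a $K_6$ minor, so by the theorem of Campbell et al.\ each has at most $4\cdot 13 - 10 = 42$ edges. Because $|E(G)| + |E(cG)| = \binom{13}{2} = 78$, both edge counts are forced into the narrow band $36 \le |E| \le 42$. This already shows that pure edge counting of the kind that settles $n \ge 15$ cannot close the argument here, and that is precisely the crux of the difficulty: the feasible window is nonempty, so a structural input is needed.

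The structural input I would extract is a local planarity constraint coming from Sachs' cone theorem. For any vertex $v$ of an nIL graph, the graph $G[\{v\} \cup N(v)]$ is the cone over $G[N(v)]$, and as an induced subgraph of the nIL graph $G$ it is itself nIL; therefore $G[N(v)]$ is planar, so $|E(G[N(v)])| \le 3\deg(v) - 6$. Applying this on both sides gives, for every vertex $v$, that $G$ restricted to the $G$-neighbors of $v$ is planar, and simultaneously that $cG$ restricted to the $G$-non-neighbors of $v$ (its $cG$-neighbors) is planar. These two conditions, together with the relation $\deg_G(v) + \deg_{cG}(v) = 12$, are the engine of the case analysis. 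Assuming without loss of generality $|E(G)| \ge 39$, the average degree of $G$ is at least $6$, so $\Delta(G) \ge 6$, and I would organize the argument by the value of $\Delta = \Delta(G)$. When $\Delta$ is large, planarity of $G[N(v)]$ for a maximum-degree vertex $v$ sharply caps the edges inside $N(v)$, which forces most of the missing pairs into $cG$; the resulting dense induced subgraph of $cG$, constrained both by the $42$-edge ceiling and by local planarity applied within $cG$, should be driven to contain a Petersen-family minor.

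The step I expect to be the main obstacle is the band of intermediate values of $\Delta$, where no single neighborhood is large enough for Sachs to be decisive and both edge counts sit comfortably inside $[36,42]$. Here I anticipate needing a genuinely structural argument rather than counting: tracking the degree distribution under $\deg_G(v) + \deg_{cG}(v) = 12$, using local planarity on both sides to limit how edges may be shared among overlapping neighborhoods, and invoking the structure of maximal linklessly embeddable graphs (those attaining $4n-10$ edges) to eliminate the surviving configurations, either by exhibiting a $K_6$ minor directly or by a $\Delta$--$Y$/$Y$--$\Delta$ reduction to another member of the Petersen family. Closing this intermediate range cleanly is, I expect, where the real work of the proof lies.
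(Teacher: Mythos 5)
Your setup is sound and even parallels the paper: the restriction to thirteen vertices, the observation that both edge counts are forced into a narrow window (so counting alone cannot finish), Sachs' cone theorem as a local planarity constraint, and a case analysis organized by maximum degree are all ingredients of the actual proof. But the proposal stops exactly where the proof has to begin: the cases $\Delta \in \{6,7,8\}$ (and indeed everything above) are not resolved, only anticipated, and your stated plan for them --- ``should be driven to contain a Petersen-family minor,'' possibly via $\Delta$--$Y$ moves --- is not an argument. The concrete missing idea is the paper's main engine, which is \emph{not} Sachs plus counting: it is the Kotlov--Lov\'asz--Vempala bound $\mu(cG)\ge n-5$ for planar $G$, combined with the Lov\'asz--Schrijver characterization $\mu\le 4 \Leftrightarrow$ nIL. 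This yields the lemma that a planar graph on at least ten vertices has an IL complement, and hence that a vertex of degree at least ten (in a graph on at least eleven vertices) forces $G$ or $cG$ to be IL: if $N(v)-v$ is nonplanar its cone is IL by Sachs, and if it is planar its complement --- a subgraph of $cG$ --- is IL by KLV. Note that your tools cannot replace this: the complement of a planar graph on ten vertices is only guaranteed about $21$ edges, far below the $4n-9 = 31$ threshold, so density arguments fail precisely here.

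Once that lemma is in hand (together with the observation that contracting an edge of $G$ turns $cG'$ into a subgraph of $cG$), the whole case analysis acquires a checkable target: show that some sequence of edge contractions in $G$ or in $cG$ creates a vertex of degree ten. That is what the paper does for $k=8$ and $k=7$, via careful bookkeeping of edges between $N(v_1)$ and the non-neighbors of $v_1$, and it also needs the Battle--Harary--Kodama theorem (a nine-vertex graph or its complement is nonplanar) to dispose of maximum degree $9$. Finally, your plan has no mechanism for the terminal case: when $k=6$, the constraints force $G$ to be $6$-regular, then strongly regular with parameters $(13,6,2,3)$, hence the Paley graph on thirteen vertices, and the proof must exhibit an explicit $K_7$ minor there --- no amount of local planarity or edge counting detects this; it is a concrete construction. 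So the proposal is a reasonable frame with the three load-bearing components (the $\mu$-based degree-ten lemma, the contraction bookkeeping, and the Paley endgame) absent.
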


\section{Notation and Background}
We first introduce notation. 
The complete graph with $n$ vertices is denoted by $K_n$.
For a graph $G$ with $n$ vertices, $cG$ represents the \textit{complement of $G$} in $K_n$.
If $\{v_1, v_2, \ldots,v_k\}$ are vertices of $G$, then $\big<v_1, v_2, \ldots,v_k\big>_G$ denotes the subgraph of $G$ induced on these vertices. 
If $v$ is a vertex of $G$, $N(v)$ is the subgraph of $G$ induced by $v$ and the vertices adjacent to $v$ in $G$. 
If $v$ is a vertex of $G$, $G-v$ is the subgraph of $G$ obtained by deleting vertex $v$ and all edges of $G$ that $v$ is incident to.
The minimum degree (maximum degree) of a vertex over all vertices of $G$ is denoted by $mindeg(G)$($maxdeg(G)$).
For a graph $G$, \textit{the cone over $G$} is the graph obtained from $G$ by adding one extra vertex and all edges between this vertex and vertices of $G$.


 Colin de Verdi\`ere introduced the new graph parameter $\mu$ based on spectral properties of matrices associated to a graph \cite{dV}.
He showed that $\mu$ is monotone under taking minors and that planarity of $G$ is equivalent to $\mu(G)\le 3$.
Lov\'asz and Schrijver \cite{LS} showed that linkless embeddability of $G$ is equivalent to $\mu(G)\le 4$.
Kotlov, Lov\'asz and Vempala \cite{KLV} showed that for a graph $G$ with $n$ vertices 
\begin{enumerate}
\item If $G$ is outerplanar, then $\mu(cG)\ge n-4$, and 
\item If $G$ is planar, then $\mu(cG)\ge n-5$. 
\end{enumerate}
These results give the following:

\begin{lemma}
\label{planarcomplement}
Consider a graph $G$ with $n\ge 10$ vertices. 
If $G$ is planar, then its complement $cG$ is IL.
\end{lemma}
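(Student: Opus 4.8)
The plan is to route everything through the Colin de Verdi\`ere parameter $\mu$, which is the common thread among the three cited results. The key point is that both the hypothesis and the conclusion can be rephrased as inequalities on $\mu$: planarity of $G$ is the condition $\mu(G)\le 3$, while $cG$ failing to be linklessly embeddable --- that is, $cG$ being IL --- holds precisely when $\mu(cG)\ge 5$, by the Lov\'asz--Schrijver characterization that linkless embeddability is equivalent to $\mu\le 4$. So it suffices to produce the single numerical bound $\mu(cG)\ge 5$.

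First I would invoke part (2) of the Kotlov--Lov\'asz--Vempala result: since $G$ is planar on $n$ vertices, $\mu(cG)\ge n-5$. Then I would feed in the hypothesis $n\ge 10$, which immediately gives $\mu(cG)\ge n-5\ge 5$. Finally, applying the Lov\'asz--Schrijver equivalence in its contrapositive form, $\mu(cG)\ge 5>4$ forces $cG$ to be not linklessly embeddable, hence intrinsically linked, which completes the argument.

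There is no genuine combinatorial obstacle here; the lemma is a direct chaining of the quoted spectral results, and the only thing to verify is that the arithmetic lines up. The role of the threshold $n\ge 10$ is precisely to eliminate the slack in the Kotlov--Lov\'asz--Vempala bound: the quantity $n-5$ first reaches the critical value $5$ at $n=10$, which is why ten vertices is the natural cutoff in this planar case (and it parallels the sharper $n-4\ge 5$, i.e. $n\ge 9$, that part (1) would yield for outerplanar $G$). The one point I would want to confirm is that the normalization of $\mu$ is consistent across all three citations, so that the inequalities can legitimately be composed into the single chain $\mu(cG)\ge n-5\ge 5$.
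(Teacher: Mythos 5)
Your proposal is correct and is essentially identical to the paper's own proof: both apply the Kotlov--Lov\'asz--Vempala bound $\mu(cG)\ge n-5\ge 5$ for planar $G$ with $n\ge 10$, and then conclude via the Lov\'asz--Schrijver characterization that $\mu(cG)\ge 5$ forces $cG$ to be intrinsically linked. Your extra remark about checking the consistency of the normalization of $\mu$ across the citations is a reasonable precaution but is not an issue here.
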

\begin{proof}
By \cite{KLV}, if $G$ is planar, then $\mu(cG)\ge n-5\ge 5$. This means $cG$ is intrinsically linked \cite{LS}.
\end{proof}

\begin{lemma}
\label{deg10}
For a graph $G$ with $n$ vertices, $n\ge 11$, if there exists a vertex of $G$ whose degree is at least 10, either $G$ or its complement is intrinsically linked. 
\end{lemma}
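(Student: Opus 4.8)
The plan is to split into two cases according to whether the subgraph induced on the neighbors of a high-degree vertex is planar. Let $v$ be a vertex of $G$ with $\deg(v)\ge 10$, let $W=\{w_1,\ldots,w_k\}$ be the set of vertices adjacent to $v$ (so $k\ge 10$), and write $H=\big<w_1,\ldots,w_k\big>_G$ for the subgraph of $G$ induced on these neighbors. The crucial observation is that $N(v)$, the subgraph induced by $v$ together with $W$, is precisely the cone over $H$: the apex $v$ is joined to every $w_i$ by the definition of $W$, while the edges among the $w_i$ are exactly the edges of $H$.

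First I would treat the case where $H$ is nonplanar. By Sachs' theorem (a graph is planar if and only if the cone over it is linklessly embeddable), the cone over a nonplanar graph is intrinsically linked, so $N(v)$ is IL. Since $N(v)$ is an induced subgraph of $G$, and intrinsic linking is inherited by supergraphs (the forbidden Petersen-family minor of $N(v)$ persists as a minor of $G$), it follows that $G$ itself is IL, and we are done in this case.

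Next I would handle the case where $H$ is planar. Here $H$ is a planar graph on $k\ge 10$ vertices, so Lemma \ref{planarcomplement} applies and shows that $cH$ is IL. But $cH$ is exactly the subgraph of $cG$ induced on $W$: two neighbors $w_i,w_j$ of $v$ are adjacent in $cG$ precisely when they are nonadjacent in $G$, that is, nonadjacent in $H$. Thus $cG$ contains the IL graph $cH$ as an induced subgraph, and therefore $cG$ is IL.

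Since the two cases are exhaustive, in either situation one of $G$ or $cG$ is intrinsically linked, which is the desired conclusion. The one conceptual step worth isolating is the identification of $N(v)$ with the cone over $H$, since this is what lets Sachs' criterion and Lemma \ref{planarcomplement} interlock to cover both sides of the planarity dichotomy; once that identification is made the argument is essentially immediate and requires no further computation.
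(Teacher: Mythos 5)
Your proof is correct and follows essentially the same route as the paper: identify $N(v)$ as the cone over the neighborhood graph $H$, then split on whether $H$ is planar, using Sachs' criterion in the nonplanar case and Lemma \ref{planarcomplement} in the planar case. The only difference is that you spell out explicitly why $cH$ sits inside $cG$, a step the paper leaves implicit.
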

\begin{proof}
Let $v$ be a vertex of $G$ with $deg(v) =q \ge 10$.  
Since $q \ge 10$, $N(v)$ is a cone over a graph with minimum 10 vertices, $N(v)-v$. 
If $N(v)-v$ is not planar, then $N(v)$ is intrinsically linked \cite{Sa} and thus $G$ is intrinsically linked.
If $N(v)-v$ is planar, then the complement of $N(v)-v$ in $K_q$ is intrinsically linked by Lemma \ref{planarcomplement}, and therefore $cG$ is intrinsically linked.

\end{proof}

\begin{observation}
If $G'$ is a minor of $G$ obtained through an edge contraction, then $cG'$ is a subgraph of $cG$.
\label{comp_subgraph}
\end{observation}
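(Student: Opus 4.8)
The plan is to exhibit an explicit identification of the vertices of $G'$ with a subset of the vertices of $G$ under which every edge of $cG'$ becomes an edge of $cG$. Suppose $G'$ is obtained from $G$ by contracting the edge $xy$, and let $w$ denote the vertex of $G'$ resulting from identifying $x$ and $y$; every other vertex of $G$ survives unchanged in $G'$. I would define the injection $\iota$ sending $w$ to $x$ and fixing every remaining vertex, so that $V(G')$ is identified with $V(G)\setminus\{y\}$, and then check that $\iota$ carries edges of $cG'$ to edges of $cG$.

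The verification splits into two cases. First, for two vertices $z,z'$ different from $w$, the contraction leaves their adjacency in $G$ untouched, so $zz'$ is an edge of $G'$ exactly when it is an edge of $G$; passing to complements, $zz'$ is an edge of $cG'$ exactly when it is an edge of $cG$. Second, for an edge incident to $w$, note that in $G'$ the vertex $w$ is adjacent to $z$ precisely when $x$ or $y$ is adjacent to $z$ in $G$. Hence $wz$ is a non-edge of $G'$ — that is, an edge of $cG'$ — precisely when both $xz$ and $yz$ are non-edges of $G$. In particular $xz$ is then a non-edge of $G$, so $\iota(w)\iota(z)=xz$ is an edge of $cG$.

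Combining the two cases, $\iota$ maps the edge set of $cG'$ injectively into the edge set of $cG$ while respecting the identification of vertices, which is exactly the statement that $cG'$ is isomorphic to a subgraph of $cG$. The only point requiring care is the asymmetry in the second case: contraction can only enlarge the neighborhood of the merged vertex $w$, to the union of the neighborhoods of $x$ and $y$, so in the complement $w$ loses edges rather than gaining them. This is precisely why one obtains a subgraph rather than an equality, and it is the single subtlety of the argument; no genuine obstacle arises, since everything reduces to the definitions of edge contraction and complement.
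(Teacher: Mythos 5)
Your proof is correct: the paper states this as an Observation without any written proof, and your argument (identifying the merged vertex $w$ with $x$, then checking that non-adjacency in $G'$ forces non-adjacency in $G$ in both cases) is precisely the verification the paper leaves implicit. The key point you isolate --- that contraction can only enlarge the neighborhood of the merged vertex, so the complement loses edges at that vertex and one gets a subgraph rather than equality --- is exactly why the observation holds.
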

This observation and Lemma \ref{deg10} imply the following:
\begin{lemma}\label{deg10minor} 
Assume $G$ is a graph with at least 12 vertices. 
If an edge contraction in $G$ creates a vertex of degree at least 10, then either $G$ or $cG$ is intrinsically linked.
\end{lemma}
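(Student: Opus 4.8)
The plan is to push the hypothesis down to the contracted graph and then invoke Lemma \ref{deg10}. Let $e$ be the edge whose contraction produces a vertex of degree at least $10$, and let $G'$ denote the graph obtained from $G$ by contracting $e$. Contracting a single edge reduces the number of vertices by exactly one, so $G'$ has $n-1 \geq 11$ vertices, which is precisely the range in which Lemma \ref{deg10} applies. Since $G'$ contains a vertex of degree at least $10$ by assumption, Lemma \ref{deg10} tells us that either $G'$ or its complement $cG'$ is intrinsically linked.

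It then remains to transport each of these two conclusions back to $G$ or $cG$. First I would handle the case that $G'$ is intrinsically linked. Because $G'$ is obtained from $G$ by an edge contraction, it is a minor of $G$; by the Robertson--Seymour--Thomas characterization \cite{RST}, intrinsic linkness is preserved under passing to a larger graph that has an intrinsically linked minor, so $G$ itself is intrinsically linked. In the complementary case, where $cG'$ is intrinsically linked, I would invoke Observation \ref{comp_subgraph}, which guarantees that $cG'$ sits inside $cG$ as a subgraph; since any supergraph of an intrinsically linked graph is again intrinsically linked (a subgraph being in particular a minor), $cG$ is intrinsically linked. Either way one of $G$, $cG$ is IL, as desired.

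I do not anticipate a genuine obstacle here: the argument is essentially a short reduction, once the vertex arithmetic $n \geq 12 \Rightarrow n-1 \geq 11$ is matched to the hypothesis of Lemma \ref{deg10}. The only point requiring care is the correct directionality of monotonicity --- namely that an intrinsically linked minor (or subgraph) forces the ambient graph to be intrinsically linked, and not the reverse --- together with checking that the complement behaves as claimed. For the latter, Observation \ref{comp_subgraph} does exactly the needed work, converting the minor relation on the $G$ side into a subgraph relation on the complement side so that both cases close in the same way.
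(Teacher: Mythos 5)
Your proof is correct and is exactly the argument the paper intends: the paper derives this lemma by combining Observation \ref{comp_subgraph} with Lemma \ref{deg10}, which is precisely your two-case reduction (IL minor gives IL graph; IL complement-subgraph gives IL complement). The only cosmetic difference is that you cite the Robertson--Seymour--Thomas characterization for minor-monotonicity of intrinsic linking, where plain minor-monotonicity (an embedding of $G$ induces an embedding of any minor) already suffices.
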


\begin{lemma}\label{nono} For $n\ge 12$, if $maxdeg(G)\ge 9$ then $G$ or $cG$ is intrinsically linked.
\begin{proof} If $G$ contains a vertex of degree at least 10, Lemma \ref{deg10} implies the conclusion.
Assume that $a\in V(G)$ has degree 9 and denote by $v_1,v_2,...,v_9$ its neighbors in $G$. If there exists a neighbor of $a$, say $v_1$, connecting to at least two vertices not in $N(a)$ then, since these would not be neighbors of $a$, by contracting the edge $av_1$, we obtain a minor of $G$ with at least 11 vertices and a newly created vertex $a=v_1$ of degree at least 10.
 Lemma \ref{deg10} provides the intrinsic linkness conclusion.

So we may assume that each vertex of $N(a)$ has at most one neighbor not in $N(a)$. Let $N:=\big<v_1,v_2,...,v_9\big>_G$. Since $\big<a,N\big>$ is a cone, unless $G$ is intrinsically linked,  $N$ is  planar.
By \cite{BHK}, $cN$ is nonplanar . 
In $cG$, $a$ is adjacent to $v_{10}, v_{11},...,v_{n-1}$, and each vertex $v_1,v_2,...,v_9$  connects to at least one vertex among $v_{10}, v_{11},...,v_{n-1}$, since for $n\ge 12$, $\{v_{10} ,v_{11},...,v_{n-1}\}$ has at least 2 elements.
 By contracting all the edges $av_{10}$, $av_{11}$, ...,$av_{n-1}$ in $cG$, a cone over $cN$ is obtained.
 Since $cN$ is nonplanar, $cG$ is intrinsically linked  \cite{Sa}.\\
\end{proof}
\end{lemma}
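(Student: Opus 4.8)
The plan is to split on the maximum degree. If $G$ has a vertex of degree at least $10$, then Lemma \ref{deg10} immediately gives that $G$ or $cG$ is IL, so I would reduce at once to the case where the maximum degree equals $9$. Fix a vertex $a$ with $\deg(a)=9$ and write $v_1,\ldots,v_9$ for its neighbors; set $N=\big<v_1,\ldots,v_9\big>_G$ and note that the vertices lying outside $N(a)$ are $v_{10},\ldots,v_{n-1}$, of which there are $n-10\ge 2$ when $n\ge 12$.

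The first step is to dispose of the situation where some neighbor $v_i$ has two or more neighbors outside $N(a)$. Contracting the edge $av_i$ merges $a$ and $v_i$ into a single vertex adjacent to the remaining eight neighbors of $a$ together with the $\ge 2$ outside neighbors of $v_i$, and these outside vertices are distinct from the $v_j$, so the merged vertex has degree at least $10$. Lemma \ref{deg10minor} then finishes this case. Hence I may assume every $v_i$ has at most one neighbor among $v_{10},\ldots,v_{n-1}$.

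Under that assumption I would run a cone argument on both sides. On the $G$ side, $\big<a,v_1,\ldots,v_9\big>_G$ is the cone over $N$, so by Sachs \cite{Sa} either it (and hence $G$) is IL, or $N$ is planar; assume the latter. Since $N$ has nine vertices, the result of Battle et al.\ \cite{BHK} forces the complement $cN$ to be nonplanar. The key move is then to produce inside $cG$ a cone over $cN$. In $cG$ the vertex $a$ is adjacent to exactly $v_{10},\ldots,v_{n-1}$, and the hypothesis that each $v_i$ misses at most one of these in $G$ means each $v_i$ is $cG$-adjacent to at least $(n-10)-1\ge 1$ of them. Contracting all of $av_{10},\ldots,av_{n-1}$ in $cG$ collapses $\{a,v_{10},\ldots,v_{n-1}\}$ to a single vertex adjacent to every $v_i$, that is, an apex over $\big<v_1,\ldots,v_9\big>_{cG}=cN$. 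Since $cN$ is nonplanar, this cone is IL by Sachs \cite{Sa}, and because IL passes to any graph containing an IL minor \cite{RST}, $cG$ is IL.

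I expect the genuinely delicate point to be the bookkeeping in the complement: one must verify that the ``at most one outside neighbor'' condition is precisely strong enough that each $v_i$ retains a connection to the contracted super\-vertex, and that this uses $n\ge 12$ (so that $n-10\ge 2$) in an essential way. The degree count and the two appeals to Sachs are routine by comparison; the real content is recognizing that contracting $a$'s complement-edges rebuilds exactly the apex over $cN$.
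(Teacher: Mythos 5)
Your proposal is correct and follows essentially the same route as the paper: the same split on whether some $v_i$ has two outside neighbors (handled by contracting $av_i$), and the same double cone argument via Sachs and Battle--Harary--Kodama, contracting $av_{10},\ldots,av_{n-1}$ in $cG$ to build an apex over the nonplanar $cN$. Your citation of Lemma \ref{deg10minor} in the first case is in fact the cleaner reference (the paper invokes Lemma \ref{deg10} directly, implicitly using Observation \ref{comp_subgraph} to pass from the minor's complement back to $cG$), but the argument is the same.
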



\section{Graphs with thirteen vertices }

\begin{theorem}Let $G$ denote a graph with 13 vertices. 
Then either $G$ or $cG$ is intrinsically linked.
\end{theorem}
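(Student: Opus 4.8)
The plan is to dispose of the unbalanced degree profiles using the lemmas already established, and then to attack the remaining balanced case directly. Since $13\ge 12$, Lemma \ref{nono} applied to $G$ settles the theorem whenever $maxdeg(G)\ge 9$, while the same lemma applied to $cG$ (and $c(cG)=G$) settles it whenever $maxdeg(cG)\ge 9$, that is, whenever $mindeg(G)\le 3$. So I may assume every vertex of $G$ has degree in $[4,8]$; because $deg_{cG}(v)=12-deg_G(v)$ the identical bound holds in $cG$, and the whole situation becomes symmetric under $G\leftrightarrow cG$.

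Before the main argument I would record two bulk tools valid throughout this regime. First, by Campbell et al.\ \cite{CMOPRW} any $13$-vertex graph with at least $4\cdot 13-9=43$ edges has a $K_6$ minor and is IL; since $|E(G)|+|E(cG)|=\binom{13}{2}=78$, the only edge counts left to consider are $36\le|E(G)|\le 42$. Second, Lemma \ref{deg10minor} lets me contract a single edge: contracting $uv$ creates a vertex of degree $deg_G(u)+deg_G(v)-2-c$, where $c$ is the number of common neighbours of $u$ and $v$, so two adjacent degree-$8$ vertices with $c\le 4$ already finish the proof. I would use both to clear away the densest and most contractible configurations first.

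The core is a cone argument run through a vertex $a$ of maximum degree, split according to whether $maxdeg(G)=8$ or $maxdeg(G),maxdeg(cG)\le 7$ (in which case every degree lies in $[5,7]$). Suppose $deg_G(a)=8$, with neighbourhood $N=\big<v_1,\dots,v_8\big>_G$ and $G$-non-neighbours $w_1,w_2,w_3,w_4$. If $N$ is nonplanar then $\big<a,N\big>$ is a cone over a nonplanar graph, so $G$ is IL. Dually, contracting in $cG$ the four edges $aw_1,\dots,aw_4$ merges them into a single vertex joined to $cN$; when that vertex reaches all of $v_1,\dots,v_8$ we obtain a cone over $cN$, so $cG$ is IL whenever $cN$ is nonplanar. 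The delicate case is when \emph{both} $N$ and $cN$ are planar, which is exactly the boundary phenomenon of Battle et al.\ \cite{BHK}, since $8$ is the largest order at which a graph and its complement can both be planar. Here I would use that such biplanar $8$-vertex graphs form a very restricted list, together with the observation that the merged vertex fails to reach all of $v_1,\dots,v_8$ only if some $v_i$ is $G$-adjacent to all four $w_j$; tracking the degree and common-neighbour constraints this forces either triggers Lemma \ref{deg10minor} or reduces the problem to checking a short list of configurations by hand. The low-degree case, all degrees in $[5,7]$, I would treat by the same cone-and-contraction bookkeeping combined with the surviving edge range $36\le|E(G)|\le 42$.

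I expect the main obstacle to be precisely this degree-$8$ case with $N$ and $cN$ both planar. Unlike the degree-$9$ situation of Lemma \ref{nono}, Battle--Harary--Kodama supplies no free nonplanar complement at order $8$, so one cannot merely exhibit a cone over a nonplanar minor; instead one must control the finitely many maximal biplanar $8$-vertex graphs and verify that every way of attaching the remaining four vertices to reach $13$ vertices forces $G$ or $cG$ to be IL. Managing this enumeration, together with the parallel middle-edge-count range where neither the Campbell bound nor a single contraction is by itself decisive, is where the real work of the proof will lie.
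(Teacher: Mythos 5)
Your opening reduction is sound and matches the paper's: Lemma \ref{nono} applied to $G$ and to $cG$ confines all degrees to $[4,8]$, and the Campbell et al.\ bound \cite{CMOPRW} confines the edge count to $36\le |E(G)|\le 42$. Your cone observation in the degree-$8$ case is also correct as far as it goes: if some $v_i$ is $G$-adjacent to three or more of the $w_j$, contracting $av_i$ creates a vertex of degree at least $10$ and Lemma \ref{deg10minor} finishes; otherwise contracting $aw_1,\dots,aw_4$ in $cG$ yields a cone over $cN$, so either $N$ or $cN$ nonplanar suffices. But the proof stops exactly where the real difficulty begins, and the two cases you defer are not loose ends to be ``checked by hand.'' First, the $8$-vertex graphs $N$ with both $N$ and $cN$ planar do not form a short list: Battle--Harary--Kodama \cite{BHK} only prove nonexistence at $9$ vertices and exhibit one example at $8$; any planar $8$-vertex graph with planar complement qualifies, and on top of enumerating them you would have to analyze every admissible attachment of the four remaining vertices. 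Nothing in your sketch bounds that case analysis. The paper avoids planarity entirely in its $k=8$ case: it counts edges between $N(v_1)$ and $H=\big<v_{10},\dots,v_{13}\big>_G$, passes to the complement, and uses the pigeonhole principle to force a contraction (or pair of contractions) in $cG$ creating a degree-$10$ vertex.

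Second, and more decisively, the regime with all degrees in $[5,7]$ cannot be dispatched by ``the same cone-and-contraction bookkeeping,'' because it contains a concrete graph on which all of your tools fail simultaneously: the Paley graph on $13$ vertices. It is $6$-regular and self-complementary, so the edge-count and degree bounds are silent; it is strongly regular with parameters $(13,6,2,3)$, so contracting any edge yields a vertex of degree $6+6-2-2=8$, well short of $10$; and the neighborhood of any vertex induces a $6$-cycle, whose complement is the (planar) triangular prism, so the cone argument fails in both $G$ and $cG$. This is precisely why the paper's $k=6$ case ends differently: after the counting pins down the structure, it identifies $G$ and $cG$ as the Paley graph and then exhibits an explicit $K_7$ minor (contracting $v_2v_9$, $v_3v_{10}$, $v_4v_{11}$, $v_5v_{12}$, $v_6v_{13}$, $v_7v_8$), invoking intrinsic linking of $K_7$ rather than any cone or degree-$10$ mechanism. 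Your plan contains no idea that would handle this configuration, and you acknowledge as much; as written, the proposal is an outline whose hardest cases remain unproved.
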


\begin{proof} Let $k:=maxdeg(G)$. 
By Lemma \ref{nono}, we need only consider $ k \le 8$.
If $k\le 5$, then $G$ has at most 32 edges, and $cG$ has at least 46 edges and is intrinsically linked \cite{CMOPRW}. 
We look at  $k\in \{6,7,8\}$.
We show that in each case, a sequence of edge contractions in either $G$ or $cG$ creates a vertex of degree ten.
Lemma \ref{deg10} provides the intrinsic linkness conclusion.
We note that, using symmetry, the assumption that $maxdeg(G)=k$ adds that $maxdeg(cG)=k$.\\

\noindent  \textbf{Case  $k=8$}. 
Assume $deg(v_1)=8$. 
Let $v_2, v_3, ..., v_9$ be the neighbors of $v_1$ in $G$, let $N:=N(v_1)$ and $H:=\big<v_{10}, v_{11}, v_{12}, v_{13}\big>_G$.
If any of the vertices of $N$ has more than 2 neighbors in the set $\{v_{10}, v_{11}, v_{12}, v_{13}\}$, then an one edge contraction creates a vertex of degree at least 10. \\
Assume that each of  the vertices of $N$ neighbors at most 2 vertices of $H$ in $G$. 
Assume $v_2v_{10}$ and $v_2v_{11}$ are edges in $G$.
 If any of $v_i$'s, $3\le i \le 9$, neighbors both $v_{12}$ and $v_{13}$ in $G$, then contracting edges $v_1v_2$ and $v_1v_i$ creates a degree 10 vertex.
So each of the vertices $v_2,v_3,...,v_9$ misses at least one element of $\{v_{12},v_{13}\}$ in $G$, which implies that contracting the edges $v_1v_{12}$ and $v_1v_{13}$ creates a degree 10 vertex neighboring $v_2, v_3, ..., v_{11}$ in a minor of $cG$.\\

\noindent We may then assume each of the vertices $v_2,v_3,...,v_9$ neighbor at most one of the vertices of $H$ in $G$, which means there are at most 8 edges between the vertices of $N:=N(v_1)$ and those of  $H$ in $G$. 
This implies there are at least $36-8=28$ edges between $cN$ and $cH$ in $cG$.
 Since four of these edges connect $v_1$ to the vertices of $cH$, there are at least 24 edges between $\{v_2,...,v_9\}$ and $cH$ in $cG$.
Unless this lower bound is attained, the pigeonhole principle implies that one vertex of $cH$, say $v_{10}$, neighbors at least 7 vertices among  $v_2,v_3,...,v_8$ and $v_9$.
Contracting $v_1v_{10}$ creates a degree ten vertex in a minor of $cG$.  
This implies that in $cG$ each vertex of $cH$ is adjacent to at most six vertices among  $v_2,v_3,...,v_8$ and $v_9$, thus giving a maximum of $4\cdot 6=24$ edges between $cH$ and $cN\backslash v_1$ in $cG$.
It follows that in $G$ each of $v_2,v_3,...,v_8$ and $v_9$  has exactly one neighbor in $H$ and each vertex of $H$ neighbors exactly 2 of $v_2,v_3,...,v_8$ and $v_9$, thus partitioning this set into 4 blocks of 2 elements each. 
Contracting any two edges in the set $\{v_1v_{10}, v_1v_{11}, v_1v_{12}, v_1v_{13}\}$ creates a degree 10 vertex  in $cG$. \\ 

\noindent  \textbf{Case  $k=7$}.
Assume $deg(v_1)=7$. 
Notice that this implies that the minimal degree in $cG$ is 5. 
Let $v_2, v_3, ..., v_8$ be the neighbors of $v_1$ in $G$. 
If any of these vertices has more than 3 neighbors in the set $\{v_9,v_{10}, v_{11}, v_{12}, v_{13}\}$ then a one edge contraction creates a vertex of degree at least 10.  
Without loss of generality, assume $v_2$ neighbors $v_9,v_{10}$, and $v_{11}$ in $G$. 
If another $v_i$ ($3\le i \le 8$) neighbors both $v_{12}$ and $v_{13}$ in $G$, contracting the edges $v_1v_2$ and $v_1v_i$ creates  a vertex of degree 10 vertex in a 11-vertex minor of $G$.
But this implies that contracting the edges $v_1v_{12}$ and $v_1v_{13}$ in $cG$ creates a degree ten vertex. \\

It follows that there are at most 14 edges between $N:=N(v_1)$ and $H:=\big<v_9,v_{10}, v_{11}, v_{12}, v_{13}\big>_G$ in $G$. 
This implies that there are $40-5-14=21$ edges between $cH$ and $\{v_2,v_3,...,v_8\}$ in $cG$. 
By the pigeon hole principle, there is at least one vertex in $cH$, say $v_9$, which connects with at least 5 vertices in $\{v_2,v_3,...,v_8\}$. If it connects with 6 of them, then contracting edge $v_1v_9$ creates a degree 10 vertex in a minor of $cG$. 
So we may assume $v_9$  misses $v_2$ and $v_3$ in $cG$. 
Since $k=7$ for both $G$ and $ \,cG$, $v_9$ neighbors at most one other vertex of $cH$. 

If $deg_{cG}(v_9)=6$, contracting edges $v_1v_2$ and $v_2v_9$ in $G$ creates a degree 10 vertex in a minor of $G$.
If $deg_{cG}(v_9)=7$ and $v_9$ neighbors $v_{10}$ in $cG$, there is at least one element of $\{v_2,...,v_8\}$ which misses both $v_9$ and $v_{10}$ in $cG$, since otherwise one might contract $v_1v_9$ and $v_1v_{10}$ to create a degree 10 vertex in a minor of $cG$. 
This vertex must be one of the $v_2$ or $v_3$, so we assume it is $v_2$. 
Contracting $v_1v_2$ and $v_2v_9$  creates a degree 10 vertex in a minor of $G$.
\end{proof}

\noindent  \textbf{Case  $k=6$}.
Every vertex of $G$ (and $cG$) has degree 6 and $G$ has 39 edges. 
The graph $G$ is said to be 6-regular.
Denote by $v_2, v_3,...,v_7$ the neighbors of $v_1$ in $G$. 
Let $N:=N(v_1)=\big<v_2, v_3,...,v_7\big>_G$. Let $H:=\big<v_8,v_9,...,v_{13}\big>_G$.  
If any of the vertices of $N$ neighbors more than 4 of the vertices of $H$, then a one edge contraction creates a degree 10 vertex in a minor of $G$.\

Assume there exists a neighbor of $v_1$ in $G$, say $v_2$, which neighbors 4 vertices of $H$, say $v_8,v_9,v_{10}, v_{11}$. 
If for any $3\le i\le 7$, $v_i$ neighbors both $v_{12}$ and $v_{13}$, contracting $v_1v_2$ and $v_1v_i$  produces a degree 10 vertex in a minor of $G$.
 It follows that each of $v_2,v_3,..., v_6$ and $v_7$ misses at least one of the $v_{12}$ and $v_{13}$ in $G$, hence contracting 
$v_1v_{12}$ and $v_1v_{13}$ in $cG$ produces a degree ten vertex.\\

Consequently, there are at most 18 edges joining $N$ to $H$. 
Since  the sum of the degrees in $G$ of the vertices of $H$ is 36, $H$ has at least 9 edges.
Similarly, it follows that $N$ has at least $(42-18)/2=12$ edges.
Looking in the complement, we notice that $v_1\cup cH$ has at most $6+6=12$ edges and that it constitutes $N(v_1)$ in $cG$. 
By symmetry, it follows that $|N|=12$, $|H|=9$ and $|L|=18$, where $L$ denotes the set of edges joining $N$ and $H$. 
The only distribution that respects the 6-regularity of both $G$ and its complement is one in which each element of either $N -v_1$ or $H$ is incident to exactly three edges of $L$. 
This implies the subgraph $\big<v_2, ..., v_7\big>_G$  has six edges, and each vertex $v_2, ..., v_7$ has degree 2 in $\big<v_2, ..., v_7\big>_G$.
Then  $\big<v_2, ..., v_7\big>_G$ is either  two disjoint triangles or a full 6--cycle.. 
If $\big<v_2, ..., v_7\big>_G$ is two disjoint cycles, $\big<v_2, ..., v_7\big>_{cG}$ is a $K_{3,3}$. 
Then $cG$ contains a   $K_{3,3,1}$ minor given by contracting all edges of $cG$ which have $v_1$ as their end point.
So it can assumed that $v_2v_3v_4v_5v_6v_7v_2$ is cycle in $G$. 
By symmetry, $v_8, ... , v_{12}$ and $v_{13}$ are joined by all edges in the $K_6$ they determine except the cycle  $v_8v_9v_{10} v_{11}v_{12} v_{13} v_8$. 
Notice that this fully describes the adjacency relations within the neighborhood of $v_1$ and those of the set of non-neighbors of $v_1$, and the adjacency relations are the same for all vertices in $G$ and $cG$:
for a vertex $v$, every neighbor of $v$ shares two common neighbors with $v$, and any non-neighbor of $v$ has three common neighbors with $v$. 
It follows that $G$ and $cG$ are both strongly regular graphs with parameters (13,6,2,3), thus they are isomorphic to the Paley graph on 13 vertices. 
This graph contains a $K_7$ minor and is thus intrinsically linked. 
The $K_7$ minor can be obtained by contracting the following edges of $G$, highlighted in Figure \ref{Paley_graph}: $v_2v_9, v_3v_{10}, v_4v_{11}, v_5v_{12}, v_6v_{13}, v_7v_8$.

\begin{figure}[htpb!]
\begin{center}
\begin{picture}(250, 250)
\put(0,0){\includegraphics[width=3.5in]{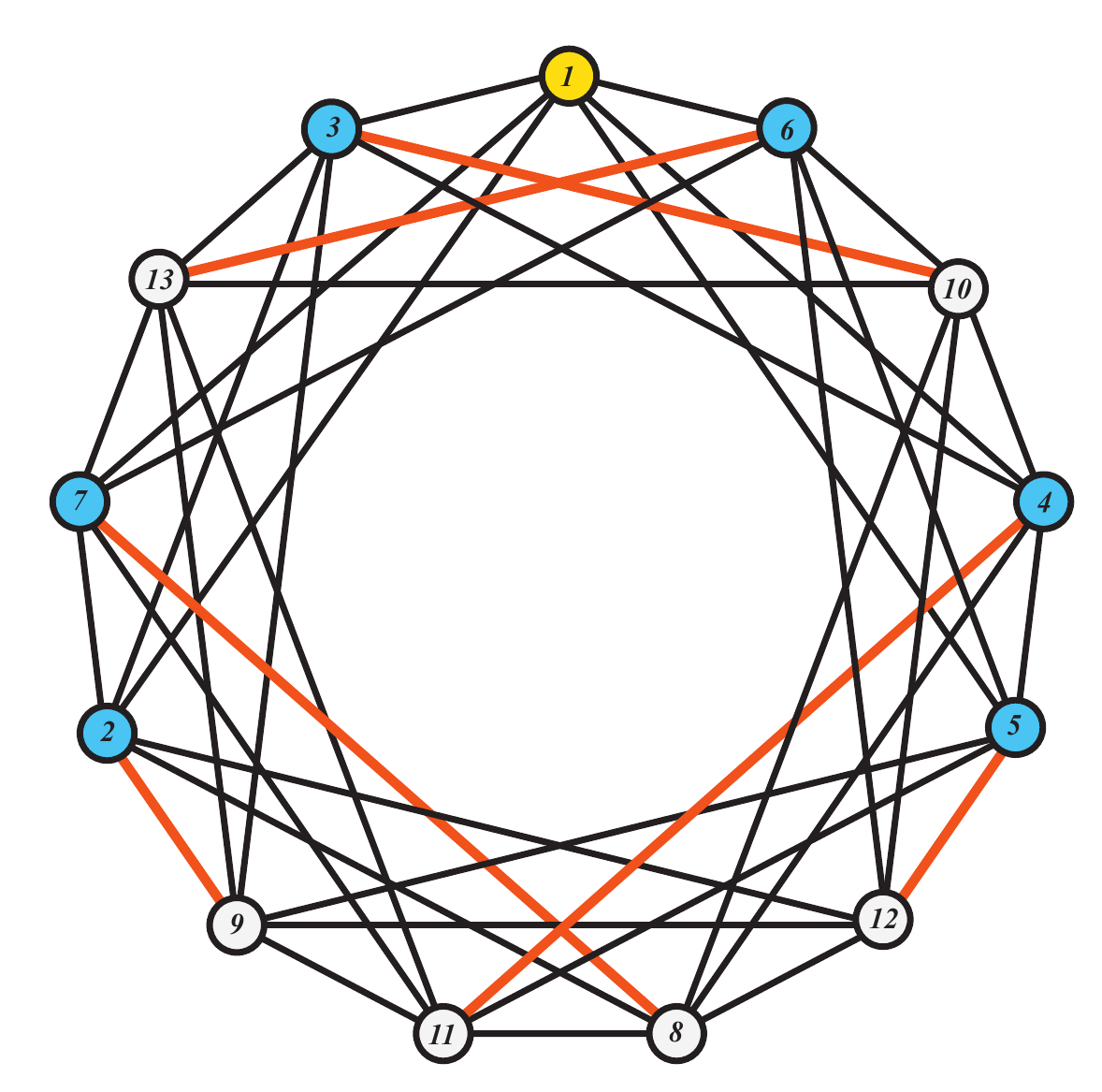}}
\end{picture}
\caption{\small  Paley graph with thirteen vertices. Contracting the highlighted edges yields a $K_7$ minor.}
\label{Paley_graph}
\end{center}
\end{figure}

\begin{remark}
It is still an open question whether the complement of a linklessly embeddable graph with 11 or 12 vertices is necessarily intrinsically linked.  
\end{remark}
%
%
%

\bibliographystyle{amsplain}

\end{document}